
\RequirePackage{amsthm}

\documentclass[ pdflatex, sn-apa]{sn-jnl}


\usepackage{graphicx}%
\usepackage{multirow}%
\usepackage{amsmath,amssymb,amsfonts}%
\usepackage{amsthm}%
\usepackage{mathrsfs}%
\usepackage[title]{appendix}%
\usepackage{xcolor}%
\usepackage{textcomp}%
\usepackage{manyfoot}%
\usepackage{booktabs}%
\usepackage{algorithm}%
\usepackage{algorithmicx}%
\usepackage{algpseudocode}%
\usepackage{listings}%
\usepackage{makecell}



\theoremstyle{thmstyleone}%
\newtheorem{theorem}{Theorem}
%

\theoremstyle{thmstyletwo}%

\theoremstyle{thmstylethree}%

\raggedbottom

\makeatletter
\renewcommand\@biblabel[1]{#1.}
\makeatother

\begin{document}

\title[Computational and Algebraic Structure of Board Games]{Computational and Algebraic Structure of Board Games}


\author*[1]{\sur{Chun-Kai} \fnm{Hwang}}\email{chunkai.hwang@gmail.com}

\author[1]{\sur{John Reuben} \fnm{Gilbert}}\email{johngilbert1182@gmail.com}

\author[2,3]{\sur{Tsung-Ren} \fnm{Huang}}\email{trhuang@ntu.edu.tw}

\author[4]{\sur{Chen-An} \fnm{Tsai}}\email{catsai@ntu.edu.tw}

\author*[1]{\sur{Yen-Jen} \fnm{Oyang}}\email{yjoyang@csie.ntu.edu.tw}


\affil[1]{\orgdiv{Department of Computer Science and Information Engineering, College of Electrical Engineering and Computer Science}, \orgname{National Taiwan University}, \orgaddress{\city{Taipei}, \postcode{106}, \state{Taiwan}, \country{R.O.C}}}

\affil[2]{\orgdiv{Department of Psychology}, \orgname{National Taiwan University}, \orgaddress{\city{Taipei}, \postcode{106}, \state{Taiwan}, \country{R.O.C}}}

\affil[3]{\orgdiv{Institute of Applied Mathematical Sciences}, \orgname{National Taiwan University}, \orgaddress{\city{Taipei}, \postcode{106}, \state{Taiwan}, \country{R.O.C}}}

\affil[4]{\orgdiv{Department of Agronomy}, \orgname{National Taiwan University}, \orgaddress{ \city{Taipei}, \postcode{106}, \state{Taiwan}, \country{R.O.C}}}


\abstract{We provide two methodologies in the area of computation theory to solve optimal strategies for board games such as Xi Gua Qi and Go. From experimental results, we find
relevance to graph theory, matrix representation, and mathematical consciousness. We prove that the decision strategy of movement for Xi Gua Qi and Chinese checker games
belongs to a subset that is neither a ring nor a group over set Y=\{-1,0,1\}. Additionally, the movement for any
board game with two players belongs to a subset that is neither a ring nor a group from the razor of Occam. We derive the closed form of the transition matrix for any board game with two players such as chess and Chinese chess. We discover that the element of the transition matrix belongs to a rational number. We propose a different methodology based on algebra theory to analyze the complexity of board games in their entirety, instead of being limited solely to endgame results. It is probable that similar decision processes of people may also belong to a matrix representation that is neither a ring nor a group.}


\keywords{Machine learning, Board games, Graph, 
Group representation, Quantum game theory, Mathematical consciousness}

\pacs[Mathematics Subject Classification]{05C25, 05C50, 05C90}

\maketitle

\section{Introduction}\label{sec1}

Pebble games (\citeauthor{kfoury1997infinite}, \citeyear{kfoury1997infinite}; \citeauthor{lee2008pebble}, \citeyear{lee2008pebble}) and relational games (\citeauthor{venema1990relational}, \citeyear{venema1990relational}) are first order logic (\citeauthor{kfoury1997infinite}, \citeyear{kfoury1997infinite}; \citeauthor{halpern2008using}, \citeyear{halpern2008using}). In this paper, we use a traditional board game, Xi Gua Qi, to discuss the phenomena about mathematics and computation theory. The playing rules of Xi Gua Qi are similar to the game Go, such that when an opponent's pieces are blocked, they become captured, and the endgame result has three possible states of a win, loss, or draw.

\par
Go and Xi Gua Qi are tri-valued logic, differing from Chinese checkers which is fourth-valued logic. We proposed two methods that can be applied to solve games such as Xi Gua Qi and Go.

\par
In addition, we demonstrate that the movement of pieces for any board game with two players such as Chinese chess, chess, or Go belongs to a subset of algebra structure that is neither a ring nor a group, in which the transition matrix is only with different dimensions of complexity over
\begin{math}\mathbb{Q}\end{math}, the rational numbers. We apply matrix representation theory in abstract algebra to analyze the complexity of board games as a novel methodology to analyze the complexity of whole board games.

\par
In graph theory, graphs can be categorized as being directed acyclic graphs (DAG) and cyclic graphs (CG). We found that Xi Gua Qi can be represented as either a DAG or CG. 

\par
A graph $G$ can be represented as a tuple of its set of vertices $V$ and edges $E$. The simple graph topology model of the initial playing state of Xi Gua Qi in Figure~\ref{fig1} can be presented as $G=(V,E)$, where
\begin{equation}
V=\{x \mid 0 \leq x \leq 20,x \in N\},
\end{equation}
\begin{equation}
\begin{split}
E=\{(0,1),(0,2),(0,3),(0,4),(1,2),(1,0),(1,4),(1,5),(2,0),(2,1),(2,3),(2,6),\\
    (3,0),(3,2),(3,7),(3,4),(4,0),(4,1),(4,3),(4,8),(5,1),(5,10),(5,9),(5,20),(6,2),\\
    (6,11),(6,12),(6,13),(7,3),(7,14),(7,15),(7,16),(8,4),(8,17),(8,18),(8,19),\\
    (9,5),(9,10),(9,20),(10,5),(10,9),(10,11),(11,6),(11,12),(11,10),(12,6),\\
    (12,13),(12,11),(13,6),(13,12),(13,14),(14,7),(14,13),(14,15),(15,7),(15,14),\\
    (15,16),(16,7),(16,15),(16,17),(17,8),(17,16),(17,18),(18,8),(18,17),(18,19),\\
    (19,8),(19,18),(19,20),(20,5),(20,9),(20,19)\}.
\end{split}
\end{equation}
\par

\begin{figure}[tb]
\includegraphics[scale=1.0]{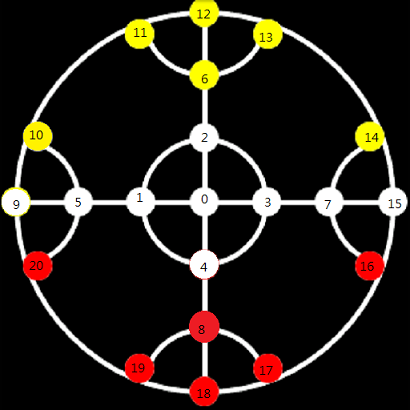}
\caption{The initial playing state of Xi Gua Qi}
\label{fig1}
\end{figure}

Here, we consider the whole game as a higher ordered complex graph. If we view each snapshot of the game board as a vector point, it can be represented as the vertex of a graph. Each snapshot of the game board is \begin{math}a \in A=X^{21}\end{math}, where \begin{math} X=\{1,2,3\}\end{math}, in which the indices 1 and 2 represent the two possible colors of game pieces, and 3 represents empty. Every game movement is represented by an edge of the graph, thereby allowing the whole game to be composed of a graph. Each edge is a functional operator (a matrix) or a function mapping \begin{math}f \end{math}, \begin{math}f:A \mapsto A\end{math}, where \begin{math} A=X^{21}, X=\{1,2,3\} \end{math}.
\par
In computational spin networks (chapter 2 of \citeauthor{moffat2020mathematically}, \citeyear{moffat2020mathematically}), every eigenstate can be represented as a vector, and the matrix operator is the edge for dual Hilbert space ( \citeauthor{moffat2020mathematically}, \citeyear{moffat2020mathematically}; \citeauthor{sieradski1992introduction}, \citeyear{sieradski1992introduction}). We apply this concept to the high ordered whole game graph. From Theorem~\ref{thm3}, we found that the game movement is a function mapping $f$ that belongs to \begin{math} D=\mathbf{M}_n(\mathbb{Q})\setminus\{\mathbf{I}_{n \times n},\mathbf{0}_{n \times n},\mathbf{Z}_{n \times n}\}\end{math}. \begin{math}\mathbf{M}_n(\mathbb{Q})\end{math} is the set of $n \times n$ matrices over $\mathbb{Q}$, the rational numbers. D is the set \begin{math}\mathbf{M}_n(\mathbb{Q})\end{math} that excludes identity matrix, zero matrix, and a matrix that contains at least one zero row vector. Additionally, \begin{math}\mathbf{M}_n (\mathbb{Q})\end{math} is a noncommutative ring and an abelian group under addition. D is neither a ring nor a group.

\section{Related Works}\label{sec2}
Table \ref{tab1} is a summary of different approaches to analyse chess games. \citeauthor{morse1944unending} (\citeyear{morse1944unending})  discussed the symbolic dynamics and the semigroups based on the unending chess. \citeauthor{stiller1996multilinear} (\citeyear{stiller1996multilinear}) applied multilinear algebra to construct the model to analyze chess endgames ( \citeauthor{bourzutschky2005chess}, \citeyear{bourzutschky2005chess}; \citeauthor{nalimov2000space}, \citeyear{nalimov2000space}) from humans and computers. Chess endgames can be analyzed with Kronecker tensor product and direct sums. They discussed the work of Friedrich Amelung and Theodor Molien, the founder of group representation, and the first person that analyzed a pawnless endgame (\citeauthor{stiller1995exploiting}, \citeyear{stiller1995exploiting}; \citeauthor{good1979p}, \citeyear{good1979p}). High-performance parallel computing can be solved from the endgame analysis.

\begin{table}[htbp]
\begin{minipage}{\textwidth}
\caption{Comparison of game strategy analysis approaches}
\label{tab1}
	\centering
		\begin{tabular}{c c c}
			\hline
			{Presenter}&{Concept} &{Game type}\\
			\hline              
{Ours}&{Graph, Group representation}&{A whole game}\\
{\citeauthor{morse1944unending} 
(\citeyear{morse1944unending})}&
{Semigroups}& {Chess endgames}\\
{\citeauthor{stiller1996multilinear} 
(\citeyear{stiller1996multilinear})}&{Multilinear algebra}&{Chess endgames}\\
{\citeauthor{schrittwieser2020mastering} 
(\citeyear{schrittwieser2020mastering})}&{Reinforcement learning}&{Game strategy}\\
{Mehta1 et al. (\citeyear{mehta2020predicting})} &{Multilayer perceptron}&{Game strategy}\\ 
{Noever1 et al. (\citeyear{noever2020chess})} & {Natural language transformer}&{Game strategy}\\
{\citeauthor{zhang2012quantum} (\citeyear{zhang2012quantum})}&{Quantum game theory}&{Game strategy}\\

        \hline
		\end{tabular}
\end{minipage}
\end{table}

\par
Chess endgames have been studied for over a century. It has been applied to improve the prediction accuracy and efficiency of reinforcement learning, data compression (\citeauthor{gomboc2022chess}, \citeyear{gomboc2022chess}), and two-level logic minimization (\citeauthor{gomboc2022chess}, \citeyear{gomboc2022chess}). The related algorithms for the logic minimization are MINI (\citeauthor{hong1974mini}, \citeyear{hong1974mini}), ESPRESSO (\citeauthor{kanakia2021espresso}, \citeyear{kanakia2021espresso}; \citeauthor{coudert2002two}, \citeyear{coudert2002two}) and Pupik (\citeauthor{fiser2008fast}, \citeyear{fiser2008fast}). It can also be applied to electronic design automation (EDA). 

\par
\citeauthor{schrittwieser2020mastering} (\citeyear{schrittwieser2020mastering}) developed a reinforcement learning algorithm to solve strategies for Atari, Go, chess and shogi games, while requiring approximately a million training steps. They compared different agents, or algorithms, including Ape-X, R2D2, MuZero, IMPALA, Rainbow, UNREAL, LASER, MuZero Reanalyze. Among these agents, MuZero Reanalyze achieved the best performance. \citeauthor{mehta2020predicting} (\citeyear{mehta2020predicting}) predicted chess moevement by using the multilayer perceptron model. They used a chess board evaluation function that could be applied to evaluate the board without deep lookahead search algorithms. They developed a chess engine, thereby avoiding the use of state space search to find the next optimal movement. \citeauthor{noever2020chess} (\citeyear{noever2020chess}) applied natural language transformers to support more generic strategic modeling, especially for text-archived games. Their approach focused on the breadth search of millions of games that would allow a language model to define a game’s rules and strategy by itself. \citeauthor{zhang2012quantum} (\citeyear{zhang2012quantum}) presented a simple but complete model to extend the quantum strategic game theory. 

\section{Simulation: Xi Gua Qi}\label{sec3}
Xi Gua Qi is one kind of pebble game with two players. The initial state of the board is presented in Figure~\ref{fig1}, with yellow and red representing the pieces of the two players.

\subsection{Method 1:  Game Tree}\label{subsec1}
We constructed an objective function that optimized the degree of freedom (DoF), i.e., the number of game pieces that are not captured by the opponent. The optimization objective value for a player is to maximize its DoF while minimizing its opponent’s DoF.  It is a general algorithm that can be applied with a min-max game tree (\citeauthor{rivest1987game}, \citeyear{rivest1987game}), and it is applicable to both Xi Gua Qi and Go. In order to determine that the opponent’s game pieces are captured, we can derive a mutable tree presented in Figure~\ref{fig3}. If the leaf nodes are comprised entirely of a single player's colors, then the opponent’s game pieces have all been captured.

\begin{figure}[tb]
\includegraphics[scale=1.0]{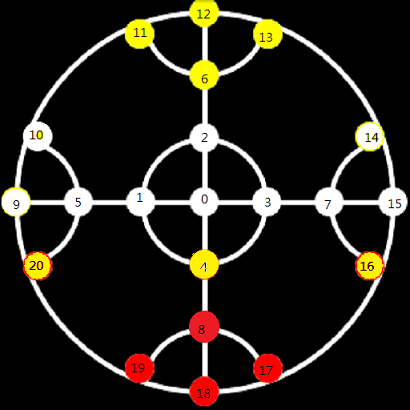}
\caption{The Graph of the board during playing state}
\label{fig2}
\end{figure}

\begin{figure}[tb]
\includegraphics[scale=1.0]{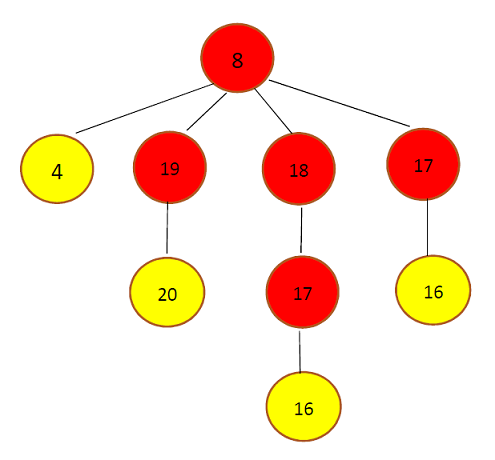}
\caption{The mutable tree for the game pieces of red color blocked by the opponent’s yellow color pieces that correspond to Figure~\ref{fig2}}
\label{fig3}
\end{figure}

\subsection{Method 2:  Probabilistic Neural Network Rule Models}\label{subsec2}

We proposed PBCR1 and PBCR2 algorithms (\citeauthor{hwang2023extracting}, \citeyear{hwang2023extracting}) to extract probabilistic Boolean rule models from Deep Neural Networks (DNN), which can also be applied to predict game movement. It basically can be applied to auto-inference (\citeauthor{hwang2023extracting}, \citeyear{hwang2023extracting}).

\par
We recorded the 100 playing games of pieces movements, and among these games, the win ratio of red color to yellow color is 50 to 50, and as a result we regarded it as a binary classification problem. There were a total of 4985 piece movements, with 2519 win (positive) cases and 2466 loss (negative) cases. 
The training features of this problem were the movement of pieces. The input was the current chess state, the next state, and the current playing color. The current state was composed of 21 nodes. Each game piece used a triple value to denote it, red, yellow, and the empty game pieces respectively. We use 2 bits to store the triple value. The current and the next state needed 42 nodes, and they were composed of 42×2=84 bits, with 1 bit also used to denote the current playing color. As a result, there were a total of 85 inputs and 1 binary output (win or loss).

\section{Results}

\begin{table}[htbp]
\begin{minipage}{\textwidth}
\caption{Performance metrics of DNN, PBCR1, PBCR2, and DT}
\label{tab2}
	\centering
        \begin{tabular}{c c c c c}
        \hline
	\makecell{Metrics\\(mean\begin{math}\pm\end{math}SD)}& {DNN}& {PBCR1}& {PBCR2} & {DT}\\
	\hline
{AUC}	   & {0.971$\pm$0.003} & {0.814$\pm$0.018} & {0.798$\pm$0.020} & {0.855$\pm$0.017}\\
{Accuracy} & {88.435\%$\pm$0.385\%} & {75.486\%$\pm$2.212\%} & {75.236\%$\pm$2.089\%} & {76.239\%$\pm$1.723\%} \\
{Sensitivity} & {80.051\%$\pm$0.046\%} & {79.866\%$\pm$0.315\%} & {79.959\%$\pm$0.368\%} & {80.172\%$\pm$0.239\%} \\
{PPV} & {95.504\%$\pm$0.910\%} & {72.715\%$\pm$3.057\%} & {72.324\%$\pm$2.856\%} & {74.164\%$\pm$2.341\%} \\
{NPV} & {83.556\%$\pm$0.107\%} & {78.790\%$\pm$1.010\%} & {78.736\%$\pm$0.957\%} & {78.700\%$\pm$0.846\%} \\
{Specificity} & {96.409\%$\pm$0.757\%} & {71.321\%$\pm$4.367\%} & {70.744\%$\pm$4.123\%} & {72.361\%$\pm$3.381\%} \\
{Odds ratio} & {113.404$\pm$29.428} & {10.187$\pm$2.243} & {9.928$\pm$2.145} & {10.796$\pm$1.806} \\
{F1} & {0.871$\pm$0.004} & {0.761$\pm$0.016} & {0.759$\pm$0.016} & {0.770$\pm$0.013} \\
			\hline
		\end{tabular}
\end{minipage}
\end{table}

Table \ref{tab2} presents the performance metrics of DNN, PBCR1, PBCR2 and decision
tree (DT) methods. In general, the performance of the DNN on each metric was better than DT, PBCR1, and PBCR2. The comparison was based on the same sensitivity on training fold
to be 80\% statistically equivalent, such that the confidence intervals of the sensitivity with different algorithms were overlapped.

\section{Discussion}
We found some interesting phenomena during different kinds of board games. If we regard each snapshot of the board game as a vector point, it can be regarded as the vertex of a graph. Every piece movement is the edge of the graph, 
The edge is a function mapping or the transition matrix  \begin{math}f:A \mapsto A\end{math}, where \begin{math}A = X^{21}, X=\{1,2,3\}\end{math}, in which 1 and 2 represent the different colors of the chess pieces, and 3 represents the empty, such that the whole game composed a graph. The win or loss can be either a directed acyclic graph (DAG) or a cyclic graph (CG), such that if there were no duplicated vertices of the graph, it would be a DAG, and a CG otherwise. 
However, the drawn game is a CG. The cardinality of the vertex would be infinite but with a finite element of the vertex set. Therefore, the graph composed from a drawn game must contain duplicate vertices. As a result, it is a CG.

\begin{theorem}[Neither A Ring nor A Group]\label{thm1}
If we view the snapshot of the board game as a vertex of a graph, the edge of this graph represents a piece movement, and the edge is an operator (a matrix) or a function mapping f, \begin{math}f:A \mapsto A\end{math}, where 
\begin{math}A=X^{21}, X=\{1,2,3\}\end{math}, 1, and 2 are the different colors of the chess pieces, and 3 represents the empty, then the following properties hold:
\begin{enumerate}
\item There exists a function mapping $f$ belonging to a subset of \begin{math}D=\mathbf{M}_{21}(Y)\setminus\{\mathbf{I}_{21 \times 21}, \mathbf{0}_{21 \times 21}, \mathbf{Z}_{21 \times 21}\}\end{math}, where \begin{math}\mathbf{M}_{21}(Y)\end{math} is the set of $21 \times 21$ matrices over $Y$, \begin{math}Y=\{-1,0,1\}\end{math}, \begin{math}\mathbf{I}_{21 \times 21}\end{math} is the identity matrix, \begin{math}\mathbf{0}_{21 \times 21}\end{math} is the zero matrix, and \begin{math}\mathbf{Z}_{21 \times 21}\end{math} is a matrix that contains at least one zero row vector.   
\item 
\begin{math}\mathbf{M}_{21}(Y)\end{math} is a noncommutative ring in abstract algebra.
\item
\begin{math}\mathbf{M}_{21}(Y)\end{math} is an abelian group under addition.
\item
\begin{math}D=\mathbf{M}_{21}(Y)\setminus\{\mathbf{I}_{21 \times 21}, \mathbf{0}_{21 \times 21}, \mathbf{Z}_{21 \times 21}\}\end{math} is neither a ring nor a group.
\end{enumerate}
\end{theorem}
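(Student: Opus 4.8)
The plan is to dispatch the four assertions largely independently: parts 2 and 3 reduce to general matrix-ring theory once the base is read as a genuine ring, part 1 is settled by an explicit construction, and part 4 reduces to the observation that the three excised matrices are exactly the distinguished identity elements that any group or ring structure would be forced to contain. I would fix at the outset the interpretation under which $Y=\{-1,0,1\}$ is closed under the two operations, namely $Y\cong\mathbb{Z}/3\mathbb{Z}$ via the reduction $-1\equiv 2\pmod 3$ (which also matches the alphabet $X=\{1,2,3\}$ read modulo $3$, with the empty symbol $3\equiv 0$), so that $\mathbf{M}_{21}(Y)$ is the matrix ring $\mathbf{M}_{21}(\mathbb{F}_3)$. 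This reading is essential because $\{-1,0,1\}$ is not closed under ordinary integer addition, so the words ``ring'' and ``group'' in parts 2 and 3 are only meaningful after the base is pinned down (as $\mathbb{F}_3$, or as the ring $\mathbb{Z}$ or $\mathbb{Q}$ that $Y$ generates, in line with the $\mathbf{M}_n(\mathbb{Q})$ convention of Section~\ref{sec1}).

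For part 1 I would exhibit a single concrete move operator and verify membership in $D$ by inspection. The cleanest choice is the permutation matrix $P$ realizing a legal one-piece slide on the board of Figure~\ref{fig1}: when a cell holds a piece and an adjacent cell is empty, sliding the piece simply swaps the contents of the two cells, and left multiplication by the corresponding transposition matrix $P$ effects exactly this swap on the state vector $a\mapsto Pa$. Such a $P$ has entries in $\{0,1\}\subseteq Y$, exactly one nonzero entry per row, and $P\neq\mathbf{I}_{21\times 21}$ because the move is nontrivial; hence $P\neq\mathbf{0}_{21\times 21}$ and $P$ has no zero row, so $P\notin\{\mathbf{I}_{21\times 21},\mathbf{0}_{21\times 21},\mathbf{Z}_{21\times 21}\}$, giving $P\in D$ as required. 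Since only existence is asserted, the more delicate capture moves (which delete pieces and need not be realizable as elements of $D$) need no separate treatment here.

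Parts 2 and 3 then follow from the standard theory. Associativity, distributivity, the additive identity $\mathbf{0}_{21\times 21}$, additive inverses, and the multiplicative identity $\mathbf{I}_{21\times 21}$ are all inherited entrywise from $\mathbb{F}_3$, yielding the unital ring of part 2; noncommutativity for $n=21\geq 2$ is witnessed by the matrix units $E_{12},E_{21}\in\mathbf{M}_{21}(Y)$ with $E_{12}E_{21}=E_{11}\neq E_{22}=E_{21}E_{12}$. Commutativity together with the remaining group axioms for $(\mathbf{M}_{21}(Y),+)$ of part 3 is likewise entrywise, completing both parts.

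For part 4 the key idea is that deleting $\mathbf{0}_{21\times 21}$ and $\mathbf{I}_{21\times 21}$ removes precisely the identity elements the structures require: a ring and an additive group must contain $\mathbf{0}_{21\times 21}$, and a multiplicative group must contain $\mathbf{I}_{21\times 21}$, yet all three excised objects are absent from $D$, so $D$ is neither a ring nor a group under either operation. I would reinforce this with explicit failure of closure using the transposition $P$ of part 1, for which $-P\in D$ while $P+(-P)=\mathbf{0}_{21\times 21}\notin D$, and $P\cdot P=\mathbf{I}_{21\times 21}\notin D$. The main obstacle is thus not any of these elementary verifications but the definitional coherence flagged above; once the ambient object is read as a bona fide matrix ring over a ring, every step is routine, and the whole theorem rests on a well-defined structure.
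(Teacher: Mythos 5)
Your proposal is correct, and parts 2--4 track the paper's own argument closely: like the authors, you rescue the ill-defined base set $Y=\{-1,0,1\}$ by identifying it with $\mathbb{Z}_3$ (the paper invokes this ``isomorphism'' and then cites Hungerford's Theorem F.1 where you verify the axioms and noncommutativity directly via matrix units), and part 4 in both cases rests on the removal of the identity elements. Where you genuinely diverge is part 1. The paper constructs the move operator row by row as a state-dependent arithmetic gadget: it locates an opponent piece at position $s$ with $a_s=2$ and sets $\sigma_{pp}=\sigma_{ps}=1$ so that $b_p=a_p+a_s=3$, sets $\sigma_{qq}=1$, $\sigma_{qs}=-1$ so that $b_q=a_q-a_s=1$, and handles captured indices $k\in K$ with $\sigma_{kk}=\sigma_{kp}=1$; this buys a representation of \emph{every} move type, captures included, which the authors need for the broader narrative that all game dynamics live in $D$. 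You instead exhibit a single transposition matrix $P$ for a capture-free slide, which is simpler, state-independent, and entirely sufficient for the literal existential claim of part 1, at the cost of not covering capture moves. Your part 4 is in fact tighter than the paper's: the authors argue only that $D$ ``lacks the identity element,'' which for the multiplicative case does not by itself rule out a group structure with a different idempotent as identity, whereas your explicit closure failures $P+(-P)=\mathbf{0}_{21\times 21}\notin D$ and $P\cdot P=\mathbf{I}_{21\times 21}\notin D$ close that loophole. You also surface, more explicitly than the paper does, that the whole statement is only coherent once $\mathbf{M}_{21}(Y)$ is read as $\mathbf{M}_{21}(\mathbb{F}_3)$.
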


\begin{proof}
\begin{enumerate}
    \item
For a vertex \begin{math}\mathbf{a}=[a_i]\end{math}, where \begin{math}i=0,…,20\end{math}, if we move the game piece from \begin{math}p\end{math} to \begin{math}q\end{math}, then either \begin{math}a_p=1\end{math} or \begin{math}a_p=2\end{math}. Furthermore, \begin{math}a_q=3\end{math} as \begin{math}q\end{math} is empty. The function mapping \begin{math}f\end{math} is a \begin{math}21 \times 21\end{math} matrix \begin{math}\mathbf{M}_{21}\end{math}. 

\par
First, consider the case that \begin{math}a_p=1\end{math}. Suppose that after the game movement from $p$ to $q$, we will get a vertex \begin{math}\mathbf{b}=[b_i]=\mathbf{M}_{21} \mathbf{a}\end{math}, with the properties that for game pieces \begin{math}b_p=3\end{math}  and \begin{math}b_q=1\end{math}. Pieces with indices \begin{math}k \in K\end{math} are captured with this movement, resulting in \begin{math}b_k=3\end{math}. Similarly, if the movement does not cause the game pieces to be captured, then \begin{math}b_i=a_i\end{math} for \begin{math}\forall i \neq p, q\end{math}, and \begin{math}i \notin K\end{math}. 

\par
Next, we show how to construct the matrix \begin{math}\mathbf{M}_{21}=[\sigma_{ij}]\end{math} with Occam's razor principle. Suppose that the game piece $s$ belongs to the opponent, i.e., \begin{math}a_s=2\end{math}, for the $p$-th row vector \begin{math}\mathbf{r}_p\end{math} of \begin{math}\mathbf{M}_{21}\end{math}, we can set \begin{math}\sigma_{pp}=1, \sigma_{ps}=1\end{math}, and \begin{math}\sigma_{pj}=0, \forall j \neq p,s\end{math}. Thus, we can construct the transformation that \begin{math}b_p=3\end{math}. 

\par
For the $q$-th row vector \begin{math}\mathbf{r}_q\end{math} of \begin{math}\mathbf{M}_{21}\end{math}, we can set  \begin{math}\sigma_{qq}=1,\sigma_{qs}=-1\end{math}, and \begin{math} \sigma_{qj}=0, \forall q \neq p,s\end{math}. Hence, we can construct the transformation where \begin{math}b_q=1\end{math}. 

\par
For the pieces \begin{math} k \in K\end{math} that are captured, which can be obtained from the tree from Method 1, the $k$-th row vector \begin{math}\mathbf{r}_k\end{math} of \begin{math}\mathbf{M}_{21}\end{math} is of the form \begin{math}\sigma_{kk}=1, \sigma_{kp}=1\end{math}, and  \begin{math}\sigma_{kj}=0,\forall j \neq k\end{math}. Therefore, we can construct the transformation in which  \begin{math}b_k=3\end{math}.

\par
For the pieces $i$ that are not captured and do not equal $p, q$, and do not belong to $K$, the $i$-th row vector \begin{math}\mathbf{r}_i\end{math} of \begin{math}\mathbf{M}_{21}\end{math} is of the form \begin{math}\sigma_{ii}=1,\sigma_{ij}=0, \forall j \neq i, \forall i \neq p, q\end{math}, and \begin{math}i \notin K\end{math}. 

\par
Hence, we construct the transformation that \begin{math}b_i=a_i\end{math} for \begin{math}\forall i \neq p, q\end{math}, and \begin{math}i \notin K\end{math}.
Similarly, we can construct the matrix \begin{math}\mathbf{M}_{21}\end{math} for the case that \begin{math}a_p=2\end{math}. Finally, we observed that the element of the matrix \begin{math}\mathbf{M}_{21}\end{math}, \begin{math}\sigma_{ij} \in \{-1,0,1\}\end{math}.
\par
The board vertex will be the same with omitting a movement by the transformation of an identity matrix, \begin{math}\mathbf{I}_{21 \times 21}\end{math}. The board vertex will be filled with the same undefined kinds of pieces by the transformation of a zero matrix, \begin{math}\mathbf{0}_{21 \times 21}\end{math}. If the \begin{math}\mathbf{Z}_{21 \times 21}\end{math} matrix multiplies the board vertex column vector, there will be undefined pieces by the transformation of a zero row vector. Therefore, the set of game movement matrix excludes \begin{math}\mathbf{I}_{21 \times 21}\end{math}, \begin{math}\mathbf{0}_{21 \times 21}\end{math}, and \begin{math}\mathbf{Z}_{21 \times 21}\end{math}.

    \item
Since \begin{math}Y=\{-1,0,1\}\end{math} is isomorphic to the set \begin{math}\mathbb{Z}_3=\{0,1,2\}\end{math}. The set \begin{math}\mathbb{Z}_3\end{math} is a ring, the operator $f$ is equivalent to a $21 \times 21$ matrix defined on \begin{math}\mathbb{Z}_3\end{math}. \begin{math}\mathbf{M}_{21} (\mathbb{Z}_3)\end{math} of all $21 \times 21$ matrices over \begin{math}\mathbb{Z}_3\end{math} is a noncommutative ring with identity \begin{math}\mathbf{I}_n\end{math} from Theorem F.1 (\citeauthor{hungerford2012abstract}, \citeyear{hungerford2012abstract}). It is stated as follows, “If $R$ is a ring with identity, then the set \begin{math}M_n (R)\end{math} of all $n \times n$ matrices over $R$ is a noncommutative ring with identity \begin{math}I_n\end{math}”. Thus, \begin{math}\mathbf{M}_{21}(Y)\end{math} is a noncommutative ring.
    \item 
From Theorem 7.1 (\citeauthor{hungerford2012abstract}, \citeyear{hungerford2012abstract}), every ring is an abelian group under addition and the statement of (2) that \begin{math}\mathbf{M}_{21}(Y)\end{math} is a noncommutative ring, it is trivial that \begin{math}\mathbf{M}_{21}(Y)\end{math} is an abelian group under addition.  
    \item 
 Since the set of game piece movement matrix excludes \begin{math}\{\mathbf{I}_{21 \times 21}, \mathbf{0}_{21 \times 21}\end{math}, \begin{math}\mathbf{Z}_{21 \times 21} \}\end{math}. This set lacks the identity element. Therefore, the game piece movement set is neither a ring nor a group.
\end{enumerate}
\end{proof} 

\begin{theorem}[Neither A Ring nor A Group for Chinese Checker Game]\label{thm2}
If we view the snapshot of the game board as a vertex of a graph, in which the edge of this graph is a piece movement, the edge is an operator (a matrix) or a function mapping $f$, \begin{math}f:A \mapsto A\end{math}, where \begin{math}A=X^n\end{math}, \begin{math}X=\{1,2,3,4\}\end{math}, 1,2,3 denote different categories of game pieces, and 4 denotes the empty. Additionally, $n$ is the number of the game board lattices, then the following properties hold:
\begin{enumerate}
    \item
There exists a function mapping $f$ belonging to a subset of
\begin{math}D=\mathbf{M}_n(Y)\setminus\{\mathbf{I}_{n \times n}, \mathbf{0}_{n \times n}, \mathbf{Z}_{n \times n}\}\end{math}, where \begin{math}\mathbf{M}_n(Y)\end{math} is the set of $n \times n$ matrices over $Y$, \begin{math}Y=\{-1,0,1\}\end{math}, \begin{math}\mathbf{I}_{n \times n}\end{math} is the identity matrix, \begin{math}\mathbf{0}_{n \times n}\end{math} is the zero matrix, and \begin{math}\mathbf{Z}_{n \times n}\end{math} is a matrix that contains at least one zero row vector.
    \item
\begin{math}\mathbf{M}_n(Y)\end{math} is a noncommutative ring in abstract algebra.
    \item
\begin{math}\mathbf{M}_n(Y)\end{math} is an abelian group under addition.
    \item
\begin{math}D=\mathbf{M}_n(Y)\setminus\{\mathbf{I}_{n \times n}, \mathbf{0}_{n \times n}, \mathbf{Z}_{n \times n}\}\end{math} is neither a ring nor a group.
\end{enumerate}

\end{theorem}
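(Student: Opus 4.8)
The plan is to observe that parts (2), (3), and (4) are formally identical to the corresponding parts of Theorem~\ref{thm1}, with the single dimension $21$ replaced by the arbitrary board size $n$, so that the entire substantive content of the theorem lies in the explicit matrix construction of part (1) for the \emph{four}-valued encoding $X=\{1,2,3,4\}$. I would therefore spend essentially all of the effort on part (1) and dispatch the algebraic parts by citation.

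For part (1), I would fix a legal move that relocates a piece of colour $c\in\{1,2,3\}$ from cell $p$ to an empty cell $q$, so that in the pre-move state $\mathbf{a}=[a_i]$ we have $a_p=c$ and $a_q=4$; let $K$ be the (possibly empty) set of cells emptied by the move in addition to $p$, obtained from the tree of Method~1 exactly as in Theorem~\ref{thm1}. I would then define $\mathbf{M}_n=[\sigma_{ij}]$ row by row so that $\mathbf{b}=\mathbf{M}_n\mathbf{a}$ realises the post-move state: the destination row copies the source, $\sigma_{qp}=1$ and $\sigma_{qj}=0$ otherwise, giving $b_q=a_p=c$; the source row copies the (empty) destination, $\sigma_{pq}=1$ and $\sigma_{pj}=0$ otherwise, giving $b_p=a_q=4$; each emptied row likewise copies the destination, $\sigma_{kq}=1$ and $\sigma_{kj}=0$ otherwise for $k\in K$, giving $b_k=4$; and every remaining row is the identity row, $\sigma_{ii}=1$ and $\sigma_{ij}=0$ otherwise, giving $b_i=a_i$. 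Every entry so defined lies in $\{0,1\}\subseteq Y=\{-1,0,1\}$, so $\mathbf{M}_n\in\mathbf{M}_n(Y)$; and since rows $p$ and $q$ differ from the corresponding identity rows while every row carries exactly one nonzero entry, $\mathbf{M}_n$ is neither $\mathbf{I}_{n\times n}$, nor $\mathbf{0}_{n\times n}$, nor a matrix with a zero row, whence $\mathbf{M}_n\in D$.

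For the algebraic parts I would reuse the Theorem~\ref{thm1} argument unchanged. Because $Y=\{-1,0,1\}$ is the balanced-residue copy of $\mathbb{Z}_3=\{0,1,2\}$, it is a ring, and by Theorem~F.1 of \citeauthor{hungerford2012abstract} (\citeyear{hungerford2012abstract}) the full matrix ring $\mathbf{M}_n(\mathbb{Z}_3)\cong\mathbf{M}_n(Y)$ is a noncommutative ring with identity $\mathbf{I}_{n\times n}$, giving (2); every ring being an abelian group under addition (Theorem~7.1 of the same reference) gives (3); and since $D$ deletes the multiplicative identity $\mathbf{I}_{n\times n}$ and the additive identity $\mathbf{0}_{n\times n}$, it possesses a neutral element for neither operation and is therefore neither a ring nor a group, giving (4). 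None of these three steps depends on the board size or on the number of piece colours, so they transfer verbatim.

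The one genuine obstacle, and the reason the proof is not a literal copy of Theorem~\ref{thm1}, is the construction in part (1): the earlier proof tacitly exploited the identity $1+2=3$ (player colour plus opponent colour equals ``empty'') to manufacture the empty value by \emph{adding} two occupied cells, but with three colours and ``empty'' encoded as $4$ no such coincidence is available --- for instance $1+2=3\neq4$ and $2+3=5$ falls outside the range --- so an additive recipe would force entries outside $\{-1,0,1\}$. I avoid this by never summing colours: every cell that must become empty is instead made to \emph{copy} the destination cell $q$, which is already empty before the move, while the destination is made to copy the source. Verifying that this copy-based matrix is simultaneously entrywise in $Y$, a faithful realisation of the move including all vacancies in $K$, and outside the excluded set $\{\mathbf{I}_{n\times n},\mathbf{0}_{n\times n},\mathbf{Z}_{n\times n}\}$ is the heart of the argument; everything else is inherited from Theorem~\ref{thm1}.
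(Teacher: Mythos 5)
Your proposal is correct, and parts (2)--(4) match the paper exactly (both dispatch them by citing Theorem~F.1 and Theorem~7.1 of \citeauthor{hungerford2012abstract} and by noting that $D$ lacks the identity elements). Part (1), however, takes a genuinely different route. The paper keeps the additive recipe of Theorem~\ref{thm1}: it picks an auxiliary cell $s$ holding an opponent's piece with $a_s=3$ and sets $\sigma_{pp}=1,\sigma_{ps}=1$ (so $b_p=1+3=4$) and $\sigma_{qq}=1,\sigma_{qs}=-1$ (so $b_q=4-3=1$), handling $a_p=2,3$ ``similarly.'' So your diagnosis that \emph{no} additive coincidence is available in the four-valued encoding is not quite right --- the paper finds one by choosing $s$ with $a_s=4-a_p$ --- but that recipe does depend on such a cell existing and forces a case split over the colour being moved. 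Your copy-based construction (destination row selects the source cell, vacated rows select the already-empty destination cell, all other rows are identity rows) avoids both issues: it works uniformly for every colour $c\in\{1,2,3\}$, needs no auxiliary cell, stays in $\{0,1\}\subseteq Y$, and makes the exclusion of $\mathbf{I}_{n\times n}$, $\mathbf{0}_{n\times n}$, and $\mathbf{Z}_{n\times n}$ immediate since every row has exactly one nonzero entry. One cosmetic difference: you carry a capture set $K$ over from Theorem~\ref{thm1}, whereas the paper's proof of Theorem~\ref{thm2} drops it (Chinese checkers has no captures); since you allow $K=\emptyset$ this is harmless, and your version is if anything slightly more general.
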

\begin{proof}

For a vertex \begin{math}\mathbf{a}=[a_i]\end{math}, where $i=0,\ldots ,n$. If we move the game piece from $p$ to $q$, then either \begin{math}a_p=1, 2, or 3\end{math}. 
Additionally, \begin{math}a_q=4\end{math} since $q$ is empty. The function mapping $f$ is an $n \times n$ matrix \begin{math}\mathbf{M}_n\end{math}. 

\par
First, we consider the case that \begin{math}a_p=1\end{math}. Suppose that after the game movement from $p$ to $q$, we get a vertex \begin{math}\mathbf{b}=[b_i]=\mathbf{M}_n \mathbf{a}\end{math}, with the properties that for game pieces \begin{math}b_p=4\end{math} and \begin{math}b_q=1\end{math}. For other lattices on the game board, we would have \begin{math}b_i=a_i\end{math} for  \begin{math}\forall i \neq p, q\end{math}.


\par
Now, we show how to construct the matrix \begin{math}\mathbf{M}_n=[\sigma_{ij}]\end{math} with Occam's razor principle. Suppose that the game piece $s$ belongs to other opponents, i.e., \begin{math}a_s=3\end{math}, for the $p$-th row vector \begin{math}\mathbf{r}_p\end{math} of \begin{math}\mathbf{M}_n\end{math}, we can set \begin{math}\sigma_{pp}=1, \sigma_{ps}=1\end{math}, and \begin{math}\sigma_{pj}=0, \forall j \neq p, s\end{math}. Hence, we construct the transformation that \begin{math}b_p=4\end{math}. 

\par
For the $q$-th row vector \begin{math}\mathbf{r}_q\end{math} of \begin{math}\mathbf{M}_n\end{math}, we can set \begin{math}\sigma_{qq}=1, \sigma_{qs}=-1\end{math}, and \begin{math}\sigma_{qj}=0, \forall q \neq p, s\end{math}. Thus, we construct the transformation that \begin{math}b_q=1\end{math}. 

\par
For other lattices $i$ that does not equal $p$, $q$, the $i$-th row vector \begin{math}\mathbf{r}_i\end{math} of \begin{math}\mathbf{M}_n\end{math} is of the form \begin{math}\sigma_{ii}=1, \sigma_{ij}=0, \forall j \neq i, \forall i \neq p, q\end{math}. Hence, we construct the transformation that \begin{math}b_i=a_i\end{math} for \begin{math}\forall i \neq p, q\end{math}.

\par
Similarly, we can construct the matrix \begin{math}\mathbf{M}_n\end{math} for the case that \begin{math}a_p=2,3\end{math}. Finally, we observed that the element of the matrix \begin{math}\mathbf{M}_n, \sigma_{ij} \in \{-1,0,1\}\end{math}.
\par
The board vertex will be the same with omitting a movement by the transformation of an identity matrix, \begin{math}\mathbf{I}_{n \times n}\end{math}. The board vertex will be filled with the same undefined kinds of pieces by the transformation of a zero matrix, \begin{math}\mathbf{0}_{n \times n}\end{math}. If a zero row vector multiplies the board vertex column vector, there will be an undefined piece. Therefore, the set of chess movement matrix excludes \begin{math}\mathbf{I}_{n \times n}\end{math}, \begin{math}\mathbf{0}_{n \times n}\end{math}, and \begin{math}\mathbf{Z}_{n \times n}\end{math}.
\par
(2), (3), and (4) are proved in a similar procedure as Theorem~\ref{thm1}.

\end{proof}

\begin{theorem}[Neither A Ring nor A Group for Any Board Game with Two Players such as Chess and Chinese Chess]\label{thm3}
If we view the snapshot of the game board as a vertex of a graph, in which the edges of the graph are game piece movements, the edge is an operator (a matrix) or a function mapping $f$, \begin{math}f:A \mapsto A\end{math}, where \begin{math}A=X^n, X=\{1,2,3,..t,t+1,...,d,d+1\}\end{math}, $1,2,3,\ldots,t$, denotes $t$ different categories of game pieces for player Alice, $t+1,\ldots,d$ denotes $(d-t)$ different categories of game pieces for player Bob, and $d+1$ denotes the empty. Additionally, $n$ is the number of the board game lattices, then the following properties hold:
\begin{enumerate}
    \item
There exists a function mapping $f$ that belongs to a subset of \begin{math}D=\mathbf{M}_n(\mathbb{Q})\setminus\{\mathbf{I}_{n \times n}, \mathbf{0}_{n \times n}, \mathbf{Z}_{n \times n}\}\end{math}, where \begin{math}\mathbf{M}_n(\mathbb{Q})\end{math} is the set of $n \times n$ matrices over $\mathbb{Q}$, the rational numbers, \begin{math}\mathbf{I}_{n \times n}\end{math} is the identity matrix, \begin{math}\mathbf{0}_{n \times n}\end{math} is the zero matrix, and \begin{math}\mathbf{Z}_{n \times n}\end{math} is the matrix that contains at least one zero row vector. 
    \item
\begin{math}\mathbf{M}_n(\mathbb{Q})\end{math} is a noncommutative ring in abstract algebra.
    \item
\begin{math}\mathbf{M}_n(\mathbb{Q})\end{math} is an abelian group under addition.
    \item
\begin{math}D=\mathbf{M}_n(\mathbb{Q})\setminus\{\mathbf{I}_{n \times n}, \mathbf{0}_{n \times n}, \mathbf{Z}_{n \times n}\}\end{math} is neither a ring nor a group.
\end{enumerate}
\end{theorem}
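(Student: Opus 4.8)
The plan is to follow the template of Theorems~\ref{thm1} and~\ref{thm2}, so that only the construction in part~(1) requires genuinely new work while parts~(2)--(4) reduce to the same two citations from Hungerford. The essential change is that a board now carries $d+1$ distinct cell-values $\{1,2,\ldots,d+1\}$ instead of three or four, so the arithmetic that turns a source value into the desired target value can no longer be forced to stay inside $Y=\{-1,0,1\}$; this is exactly the point at which the ambient set must be enlarged to $\mathbb{Q}$.

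For part~(1) I would fix a board vertex $\mathbf{a}=[a_i]$ and a legal move of a piece of type $v\in\{1,\ldots,d\}$ from an occupied cell $p$ (so $a_p=v$) to an empty cell $q$ (so $a_q=d+1$), together with the set $K$ of cells whose pieces this move captures (read off, as in Method~1, from the mutable tree). The target vertex $\mathbf{b}=[b_i]=\mathbf{M}_n\mathbf{a}$ is then specified coordinatewise by $b_q=v$, $b_p=d+1$, $b_k=d+1$ for $k\in K$, and $b_i=a_i$ otherwise. Because each target coordinate $b_j$ depends only on the single input coordinate $a_j$, I would realise the move by a \emph{diagonal} matrix $\mathbf{M}_n=\mathrm{diag}(\sigma_{00},\ldots,\sigma_{(n-1)(n-1)})$, setting $\sigma_{jj}=b_j/a_j$ in each row by the Occam's razor principle. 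Every board value lies in $\{1,\ldots,d+1\}$ and is therefore a strictly positive integer, so each ratio $\sigma_{jj}=b_j/a_j$ is a well-defined element of $\mathbb{Q}$; in particular $\sigma_{pp}=(d+1)/v>1$ and $0<\sigma_{qq}=v/(d+1)<1$, so both already leave $\{-1,0,1\}$, which motivates the enlargement from $Y$ to $\mathbb{Q}$. I would then check membership $\mathbf{M}_n\in D$: since every $\sigma_{jj}=b_j/a_j>0$, no row is a zero vector, so $\mathbf{M}_n\neq\mathbf{Z}_{n\times n}$ and $\mathbf{M}_n\neq\mathbf{0}_{n\times n}$; and since the move changes the value at $q$, the entry $\sigma_{qq}=v/(d+1)\neq 1$, so $\mathbf{M}_n\neq\mathbf{I}_{n\times n}$.

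Parts~(2) and~(3) then go through verbatim. Since $\mathbb{Q}$ is a ring with identity (indeed a field), Theorem~F.1 of \citeauthor{hungerford2012abstract} (\citeyear{hungerford2012abstract}) yields that $\mathbf{M}_n(\mathbb{Q})$ is a noncommutative ring with identity $\mathbf{I}_{n\times n}$, and Theorem~7.1 of the same reference gives that every ring is an abelian group under addition, hence so is $\mathbf{M}_n(\mathbb{Q})$. For part~(4) I would argue, exactly as in Theorem~\ref{thm1}(4), that $D$ deletes $\mathbf{I}_{n\times n}$ and $\mathbf{0}_{n\times n}$: removing $\mathbf{I}_{n\times n}$ destroys the multiplicative identity and removing $\mathbf{0}_{n\times n}$ destroys the additive identity, so $D$ can be neither a ring nor a group, and one may add that $D$ is not even closed under the two operations.

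The step I expect to be the main obstacle is part~(1): one must be sure that a single fixed rational matrix simultaneously empties the source cell, installs the moved piece on the target cell, and clears every captured cell, while leaving the remaining cells untouched. The diagonal construction handles all of these uniformly precisely because each updated value is a function of the old value in that same cell alone, but the argument relies on the fact that the empty symbol and every piece type are encoded by strictly positive integers, so that the divisions $b_j/a_j$ never meet a zero denominator; this positivity is what makes $\mathbb{Q}$ sufficient for the natural construction, and it is the point I would be most careful to state precisely. As in the earlier theorems, $\mathbf{M}_n$ need only act correctly on the given vertex $\mathbf{a}$, so no global constraint on its action elsewhere is required.
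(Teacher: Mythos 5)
Your proposal is correct, and parts (2)--(4) coincide with the paper's proof (the same appeal to Theorem F.1 and Theorem 7.1 of Hungerford, and the same ``$D$ lacks the identity elements'' argument for (4)). For part (1), however, you take a genuinely different route. The paper keeps every diagonal entry equal to $1$ and encodes each update in an off-diagonal correction term that reads a \emph{second} cell: row $p$ uses $\sigma_{ps}=(1/u)(d+1-x)$ referencing an opponent piece $s$ with $a_s=u$, row $q$ uses $\sigma_{qs}=(1/u)(x-d-1)$, and each captured row $k$ uses $\sigma_{kp}=(1/x)(d+1-v)$ referencing the source cell $p$. Your construction is instead purely diagonal, $\sigma_{jj}=b_j/a_j$, exploiting the fact that every cell value is a strictly positive integer. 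Your version is simpler and more uniform (one formula covers source, target, captured, and untouched cells), it does not require an opponent piece $s$ to exist on the board (a hypothesis the paper's rows $p$ and $q$ silently need), and it verifies membership in $D$ directly from the constructed entries rather than by the paper's informal argument about what $\mathbf{I}$, $\mathbf{0}$, and $\mathbf{Z}$ would do to a board. What the paper's construction buys in exchange is continuity with Theorems~\ref{thm1} and~\ref{thm2}: over $Y=\{-1,0,1\}$ a diagonal construction is impossible (ratios such as $3/1$ leave $Y$), so the unit-diagonal-plus-correction pattern is the one that specializes back to those cases, whereas your diagonal matrix is intrinsically a $\mathbb{Q}$-construction --- a point you correctly identify as the reason the ambient set must be enlarged. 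Both constructions share the same tacit convention, which you state explicitly and the paper leaves implicit, that $\mathbf{M}_n$ is only required to act correctly on the single given vertex $\mathbf{a}$.
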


\begin{proof}
\begin{enumerate}
    \item 
Similar to the proof of Theorem~\ref{thm1}, since any \begin{math}x \in X=\{1,2,3,...,d+1\}\end{math} can be composed from an inner product of row vector \begin{math}\mathbf{y} \in \mathbb{Q}^n\end{math} and any vector \begin{math}\mathbf{a} \in X^n\end{math}, we can  construct an $n \times n$ matrix over $\mathbb{Q}$, \begin{math}\mathbf{M}_n \in \mathbf{M}_n(\mathbb{Q})\end{math}, such that 
\begin{math}\mathbf{b}=\mathbf{M}_n \mathbf{a} \in X^n\end{math}, i.e., \begin{math}\mathbf{M}_n\end{math} is composed of different row vector \begin{math}\mathbf{y} \in \mathbb{Q}^n\end{math}.

\par
For a vertex \begin{math}\mathbf{a}=[a_i]\end{math}, where \begin{math}i=0,…,n\end{math}. If we move the game piece from $p$ to $q$, then we have \begin{math}a_p=1,2,…,t,,t+1,…,d\end{math}. In addition, \begin{math}a_q=d+1\end{math} as \begin{math}q\end{math} is empty. The function mapping \begin{math}f\end{math} is an \begin{math}n \times n\end{math} matrix \begin{math}\mathbf{M}_n\end{math}. 
\par
First, we consider the case that \begin{math}
a_p=x \in X\end{math}. Suppose that after the game piece movement from $p$ to $q$, we will get a vertex \begin{math}\mathbf{b}=[b_i]=\mathbf{M}_n \mathbf{a}\end{math}, with the properties that for game pieces \begin{math}b_p=d+1\end{math} and \begin{math}b_q=x\end{math}. Pieces with indices \begin{math}k \in K\end{math} are captured with this movement, resulting in \begin{math}b_k=d+1\end{math}. In addition, if the movement does not cause the game pieces to be captured, then \begin{math}b_i=a_i\end{math} for \begin{math}\forall i \neq p, q\end{math}, and \begin{math}i \notin K\end{math}.

\par
Here, we give an intuitive simple construction of \begin{math}\mathbf{M}_n=[\sigma_{ij}]\end{math} with Occam's razor principle. Suppose that the opponent chess piece s with the properties, \begin{math}a_s=u, t+1 \leq u \leq d\end{math}. We want to derive the \begin{math}\sigma_{ps}\end{math} such that 
\begin{math}\sigma_{pp} x+\sigma_{ps}u=d+1\end{math}. Therefore, if \begin{math}\sigma_{pp}=1\end{math}, we have 
\begin{math}\sigma_{ps}=(1/u)(d+1-x) \in \mathbb{Q}\end{math}.

\par
For the $p$-th row vector \begin{math}\mathbf{r}_p\end{math} of \begin{math}\mathbf{M}_n\end{math}, we can set \begin{math}\sigma_{pp}=1\end{math}, and 
\begin{math}\sigma_{ps}=(1/u)(d+1-x) \in \mathbb{Q}\end{math}, and 
\begin{math}\sigma_{pj}=0, \forall j \neq p, s\end{math}. Hence, we construct the transformation that \begin{math}b_p=d+1\end{math}.
\par
For the $q$-th row vector \begin{math}\mathbf{r}_q\end{math} of \begin{math}\mathbf{M}_n\end{math}, we want to derive the \begin{math}\sigma_{ps}\end{math} such that
\begin{math}\sigma_{qq}(d+1)+\sigma_{qs}u=x\end{math}. Therefore, if \begin{math} \sigma_{qq}=1\end{math}, we have 
\begin{math}\sigma_{qs}=(1/u)(x-d-1) \in \mathbb{Q} \end{math}, and \begin{math}\sigma_{qj}=0, \forall q \neq p, s\end{math}. Hence, we construct the transformation that \begin{math}b_q=x\end{math}. 

\par
For the game pieces \begin{math}k \in K\end{math} that are captured, as determined by the rules of the board game, let the captured piece be denoted with value $v$. For the $k$-th row vector \begin{math}\mathbf{r}_k\end{math} of \begin{math}\mathbf{M}_n\end{math}, we want to derive the \begin{math}\sigma_{kp}\end{math} such that \begin{math}\sigma_{kk}v+\sigma_{kp}x=d+1\end{math}. We can construct \begin{math}\mathbf{M}_n\end{math} of the form \begin{math}\sigma_{kk}=1, \sigma_{kp}=(1/x)(d+1-v) \in \mathbb{Q}\end{math}, and \begin{math}\sigma_{kj}=0, \forall j \neq k\end{math}. Therefore, we construct the transformation that \begin{math}b_k=d+1\end{math}.

\par
For the game pieces $i$ that are not captured, do not equal $p$, $q$, and do not belong to $K$, the $i$-th row vector \begin{math}\mathbf{r}_i\end{math} of \begin{math}\mathbf{M}_n\end{math} is of the form \begin{math}\sigma_{ii}=1, \sigma_{ij}=0, \forall j \neq i, \forall i \neq p, q\end{math}, and \begin{math}i \notin K\end{math}. Hence, we construct the transformation that \begin{math}b_i=a_i\end{math} for \begin{math}\forall i \neq p, q, \end{math} and \begin{math}i \notin K\end{math}.
Consequently, we observed that the element of the matrix belongs to rational numbers, that is, \begin{math}\mathbf{M}_n=[\sigma_{ij}], \sigma_{ij} \in \mathbb{Q}\end{math}.
\par
The board vertex will be the same with omitting a movement by the transformation of an identity matrix, \begin{math}\mathbf{I}_{n \times n}\end{math}. The board vertex will be filled with the same undefined kinds of pieces by the transformation of a zero matrix, \begin{math}\mathbf{0}_{n \times n}\end{math}. If the \begin{math}\mathbf{Z}_{n \times n}\end{math} matrix multiplies the board vertex column vector, there will be undefined pieces by the transformation of a zero row vector. Therefore, the set of game piece movement matrix excludes \begin{math}\mathbf{I}_{n \times n}\end{math},  \begin{math}\mathbf{0}_{n \times n}\end{math}, and \begin{math}\mathbf{Z}_{n \times n}\end{math}. 
    \item
Since \begin{math}\mathbb{Q}\end{math} is a ring with identity, from Theorem F.1 (\citeauthor{hungerford2012abstract}, \citeyear{hungerford2012abstract}). \begin{math}\mathbf{M}_n(\mathbb{Q})\end{math} is a noncommutative ring.
    \item
It is trivial and similar to the proof (3) of Theorem~\ref{thm1}.
    \item 
Since the set of game piece movement matrix excludes \begin{math}\mathbf{I}_{n \times n}\end{math}, \begin{math}\mathbf{0}_{n \times n}\end{math}, and \begin{math}\mathbf{Z}_{n \times n}\end{math}, this set lacks the identity element. Hence, the game piece movement set is neither a ring nor a group.
\end{enumerate}

\end{proof}

\section{Conclusions and Future Works}
We are able to discuss the complexity of different board games from Theorem~\ref{thm1},~\ref{thm2}, and ~\ref{thm3}. From Theorem~\ref{thm1}, we found that the movements in Xi Gua Qi belong to \begin{math}D=\mathbf{M}_{21}(Y)\setminus\{\mathbf{I}_{21 \times 21}, \mathbf{0}_{21 \times 21}, \mathbf{Z}_{21 \times 21}\}\end{math}. From Theorem~\ref{thm2}, we found that the game movements of Chinese checker belong to \begin{math}\mathbf{M}_n (Y)\setminus\{\mathbf{I}_{n \times n}, \mathbf{0}_{n \times n}, \mathbf{Z}_{n \times n}\}\end{math}, where $n$ is the number of the game board lattices. From Theorem~\ref{thm3}, we found that the movements of any board game with two players belong to \begin{math}\mathbf{M}_n(\mathbb{Q})\setminus\{\mathbf{I}_{n \times n}, \mathbf{0}_{n \times n}, \mathbf{Z}_{n \times n}\}\end{math}, where \begin{math}\mathbb{Q}\end{math}  represents the rational numbers and $n$ is the numbers of the game board lattices.

\par
In Theorem~\ref{thm1}, we proved that the set of the function mapping \begin{math}f\end{math} is neither a ring nor a group. It is an interesting topic about the algebraic structure of the operator $f$, for example, whether it is with an interesting structure similar to \begin{math}SU(2)\end{math} ( \citeauthor{moffat2020mathematically}, \citeyear{moffat2020mathematically}; \citeauthor{rovelli2015covariant}, \citeyear{rovelli2015covariant}). We found the transition matrix is composed of the element from set \begin{math}Y=\{-1,0,1\}\end{math}, and it is a sparse matrix. Chess games might be related to reductive groups (\citeauthor{minchenko2011zariski}, \citeyear{minchenko2011zariski}), compact groups (\citeauthor{varju2012random}, \citeyear{varju2012random}), Lie groups (\citeauthor{breuillard2004random}, \citeyear{breuillard2004random}; \citeauthor{versendaal2019large}, \citeyear{versendaal2019large}) and random walks on groups and random transformations (\citeauthor{furman2002random}, \citeyear{furman2002random}) in abstract algebra.

\par
In quantum theory, we know that the quantum is in essence the tensor operation on the dual space (\citeauthor{moffat2020mathematically}, \citeyear{moffat2020mathematically}). We may develop a quantum game theory method (\citeauthor{zhang2012quantum}, \citeyear{zhang2012quantum}; \citeauthor{ghosh2021quantum1}, \citeyear{ghosh2021quantum1}; \citeauthor{ghosh2021quantum2}, \citeyear{ghosh2021quantum2}; \citeauthor{ghosh2021quantum3}, \citeyear{ghosh2021quantum3}) via an approach that is similar to computational spin networks (\citeauthor{moffat2020mathematically}, \citeyear{moffat2020mathematically}). 
Every state corresponds to a vector. It is probable that the human decision-making process for other strategies not limited to playing chess can be represented as a matrix operator that neither belongs to a ring nor a group, \begin{math}\mathbf{M}_n(\mathbb{R})\setminus\{\mathbf{I}_{n \times n}, \mathbf{0}_{n \times n}, \mathbf{Z}_{n \times n}\}\end{math}, where \begin{math}\mathbb{R}\end{math} represents the real numbers.

\vspace{6pt} 




\backmatter









\bigskip\noindent\textbf{Author contributions}
conceptualization, C.K.H.; methodology, C.K.H.; software, C.K.H.; validation, C.K.H., T.R.H, C.A.T. and Y.J.O; formal analysis, C.K.H.; investigation,  C.K.H. and Y.J.O.; resources, C.K.H.; data curation, C.K.H.; writing---original draft preparation, C.K.H.; writing---review and editing, C.K.H., J.R.G., T.R.H, C.A.T, and Y.J.O; visualization, C.K.H.; supervision, Y.J.O; project administration, C.K.H.;All authors have read and agreed to the published version of the manuscript.

\bigskip\noindent\textbf {Funding}
No funding was received for conducting this study.

\bigskip\noindent\textbf{Data availability}
The datasets generated and analysed during the current study are not publicly available due the fact that they constitute an excerpt of research in progress but are available from the corresponding author on reasonable request.

\bigskip\noindent\textbf{Code availability}
The codes are not publicly available but are available from the corresponding author on reasonable request.

\section*{Declarations}



\bigskip\noindent\textbf{Conflict of Interest}
The authors declare that they have no conflicts of interest.

\bigskip\noindent\textbf{Ethics approval}
Not applicable.

\bigskip\noindent\textbf{Consent to participate}
Not applicable.

\bigskip\noindent\textbf{Consent for publication}
All authors consent.

\end{document}